\title{There is No Composition in the Computable Reducibility Degrees}
\author{Daniel Mourad}
\theoremstyle{plain}
\newtheorem{Theorem}{Theorem}[section]
\newtheorem{Proposition}[Theorem]{Proposition}
\theoremstyle{definition}
\newtheorem{Definition}[Theorem]{Definition}
\theoremstyle{plain}
\DeclareMathOperator{\dom}{dom}
\DeclareMathOperator{\id}{id}
\newcommand\wdoublehat[2][0mu]{\mskip#1\widehat{\mskip-#1\widehat{#2}\mskip#1}\mskip-#1\relax}
\newcommand{\Xdoublehat}{\wdoublehat[1.5mu]{X}}
\begin{document}
	\maketitle
	\begin{abstract}
		We show that, in general, there is no degree corresponding to the composition of two problems in the computable reducibility lattice. 
	\end{abstract}
	\section{Introduction}
	In the program of reverse mathematics, single use reductions between instance-solution problems provide a way to analyze observations such as ``Theorem $P$ can be proven using one application of Theorem $Q$''.
	
	The compositional product $\star$ in the Weihrauch degrees extends this functionality. Using the $\star$ operator, one can also substantiate statements such as ``$P$ can be proven with exactly two applications of $Q$ and no fewer'' and ``Theorem $R$ can be proven with an application of $Q$ followed by an application of $P$, but not the other way around''. In particular, the $\star$ operator has also been used to break proofs into pieces, as well as to put a bound on the degree of non-uniformity in computable reductions (see, \emph{e.g.}, \cite{SoldaShaferFioricarones2022}). 
	
	However, despite the success of the compositional product for Weihrauch reducibility, a compositional product has not been discovered in the computable reducibility degrees. The main result of this paper is that this is by necessity. 
	
	We consider a notion of composition whose analogue under Weihrauch reducibility characterizes the $\star$-product. 
	
	\theoremstyle{definition}
	\newtheorem*{def:composition}{Definition \ref{SetDefinitionOfComposition}}
	\theoremstyle{plain}
	\begin{def:composition}
		Let $P$ and $Q$ be problems. Define $P *_c Q$ to be the set of compositions
		\[
		P *_c Q =  \{P' \circ Q':  P' \leq_c P \text{ and } Q' \leq_c Q\}.
		\]  
		
		\noindent
		For problem $R$, we say that $R \leq_c P *_c Q$ if and only if there is a $(P' \circ Q') \in (P *_c Q)$ such that $R \leq_c P' \circ Q'$.
	\end{def:composition}
	
	We show that there is no computable reducibility degree corresponding with this notion of reducibility. 
	
	\newtheorem*{thm:nocomposition}{Theorem \ref{NoCompositionInComputableReductionDegrees}}
	\begin{thm:nocomposition}[No Composition Theorem]
		There are problems $P$ and $Q$ such that for all problems $S$ there exists a problem $R$ such that $R \leq_c P *_c Q$ but $R \not\leq_c S$. 
	\end{thm:nocomposition}
	
	To support the claim that Theorem \ref{NoCompositionInComputableReductionDegrees} shows that there is no compositional product in the computable reducibility degrees, we introduce another notion of computable reducibility to a composition of two problems in Definition \ref{QFollowedByP} and show that it is equivalent to the notion stated in Definition \ref{SetDefinitionOfComposition}. 
	
	\section{Notions}
	
	We work with reductions on instance solution problems. 
	
	\begin{Definition}
		A problem $P: \subseteq \omega^\omega \rightrightarrows \omega^\omega$ is a partial multi-function taking $\omega$-valued sequences to sets of $\omega$-valued sequences. We call $\dom(P)$ the \emph{instances} of $P$, or the set of $P$\emph{-instances}. For instance $X$ of $P$, we say that elements of $P(X)$ are the $P$\emph{-solutions} of $X$. 
	\end{Definition}

	We now define computable reducibility of two problems, with the forwards function specified. 
	
	\begin{Definition}
		\label{ComputablyReducibleViaF}
		Let $P$ and $Q$ be problems. We say that $P \leq_c Q$ via function $F: \dom(P) \to \dom(Q)$ if for all $X \in \dom(P)$ we have that $X$ computes $Q$-instance $\widehat{X} := F(X) \leq_T X$ such that for each $\widehat{Y} \in Q(\widehat{X})$ there is a $Y \leq_T X \oplus \widehat{Y}$ such that $Y \in P(X)$ (see Figure \ref{fig:compReducibility}). 
		
		\noindent In this case, we call $F$ the \emph{forward function} of the computable reduction of $P$ to $Q$. 
	\end{Definition}

	Note that $P \leq_c Q$ if and only if there is $F$ such that $P \leq_c Q$ via $F$, with the implication going from left to right using the axiom of choice. 

		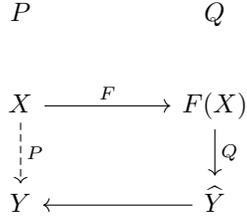
\begin{figure}
			\begin{center}
			\begin{tikzcd}
				P & & Q\\
				X \arrow[rr,"F"] \arrow[d, dashed,"P"] & & F(X) \arrow[d, "Q"]\\
				Y & & \widehat{Y} \arrow[ll] 
			\end{tikzcd}
		\end{center}
		\caption{Computable reducibility via $F$}
		\label{fig:compReducibility}
		\end{figure}
	
	\citeauthor{Dzhafarov2022} \cite{Dzhafarov2022} give a notion of computable reducibility to multiple applications of a problem. In the same vein, we consider computable reducibility of a problem relative to using a problem followed by another problem. Again, we also give a definition with the forwards functions specified.
	
	\begin{Definition}
		\label{QFollowedByP}
		Let $P$, $Q$, and $R$ be problems. We say that $R$ is computably reducible to $Q$ followed by $P$ if 
		\begin{itemize} \item for all $R$-instances $X$ there is $Q$-instance $\widehat{X} \leq_T X$ such that 
			\item for all $Q$-solutions $\widehat{Y}$ of $\widehat{X}$ there is $P$-instance $\Xdoublehat \leq_T X \oplus \widehat{Y}$ such that
			\item for all $P$-solutions $\widehat{\widehat{Y}}$ of $\Xdoublehat$, there is $R$-solution $Y \leq_T X \oplus \widehat{Y} \oplus \widehat{\widehat{Y}}$ of $X$. 
		\end{itemize}
		We say that $R$ is computably reducible to $Q$ followed by $P$ via functions $F$ and $G$ if $F(X) = \widehat{X}$ and $G(X,\widehat{Y}) = \Xdoublehat$.
	\end{Definition}
	We once again have that, using the axiom of choice, if $R$ is reducible to $Q$ followed by $P$ then there are $F$ and $G$ such that $R$ is computably reducible to $Q$ followed by $P$ via $F$ and $G$.
		
	Definition \ref{QFollowedByP} is in the spirit of the reduction games of \citeauthor{Hirschfeldt2016} \cite{Hirschfeldt2016} and \citeauthor{Goh2020} \cite{Goh2020}. $R$ being computably reducible to $Q$ followed by $P$ is equivalent to player 2  having a winning strategy in the $G\left([Q \sqcup P] \rightarrow R\right)$ game \cite{Hirschfeldt2016} in which the first move always uses $Q$, the second move always uses $P$, and the third move solves $R$. It is also the computable reducibility analogue of $R \leq_W^{(2)} Q,P$ \cite{Goh2020}.

	\citeauthor{MarconeGherardiBrattka2012} \cite{MarconeGherardiBrattka2012} studied composition in the Weihrauch degrees as $P * Q = \sup_{\leq_W}\{P' \circ Q': P' \leq_W P \text{ and } Q' \leq_W Q\}$, where $\circ$ denotes set-theoretic composition of partial multi-functions. \citeauthor{Brattka2018} \cite{Brattka2018} showed that this supremum always exists and defined the problem $P \star Q$ as a representative of $P * Q$. 
	
	\begin{Definition}
		Let $P$ and $Q$ be problems. Define their composition $P \circ Q$ to be the problem whose instances are $X \in \dom(Q)$ such that for all $Y \in Q(X)$ we have that $Y \in \dom(P)$. $(P \circ Q)$-solutions of $X$ are any $Z$ such that there exists $Y \in Q(X)$ such that $Z \in P(Y)$. 
	\end{Definition}

	We state the analogous definition of $P * Q$ for computable reducibility. 
	
	\begin{Definition}
		\label{SetDefinitionOfComposition}
		Let $P$ and $Q$ be problems. Define $P *_c Q$ to be the set of compositions
		\[
		P *_c Q =  \{P' \circ Q':  P' \leq_c P \text{ and } Q' \leq_c Q\}.
		\] 
		
		\noindent
		For problem $R$, we say that $R \leq_c P *_c Q$ if and only if there is a $(P' \circ Q') \in (P *_c Q)$ such that $R \leq_c P' \circ Q'$. 
	\end{Definition}
	
	Together, Proposition \ref{DefinitionsCoincide} and Theorem \ref{NoCompositionInComputableReductionDegrees} show that, unlike the analogous case for Weihrauch degrees, the supremum of $P *_c Q$ does not exist in general.

	\section{Results}

	First, we show that Definitions \ref{QFollowedByP} and \ref{SetDefinitionOfComposition} coincide. 
	
	\begin{Proposition}
		\label{DefinitionsCoincide}
		Let $P$, $Q$, and $R$ be problems. Then, $R$ is computably reducible to $Q$ followed by $P$ if and only if $R \leq_c P *_c Q$. 
	\end{Proposition}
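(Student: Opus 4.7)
My plan is to prove the two directions separately, with the forward direction ($R$ reducible to $Q$ followed by $P$ implies $R \leq_c P *_c Q$) requiring a specific construction and the backward direction being a chain of unfoldings.

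For the forward direction, suppose $R$ is computably reducible to $Q$ followed by $P$ via functions $F$ and $G$. I would construct $Q'$ and $P'$ as follows. Set $\dom(Q') = \dom(R)$, and for $X \in \dom(R)$ let
\[
Q'(X) = \{X \oplus \widehat{Y} : \widehat{Y} \in Q(F(X))\},
\]
so that a $Q'$-solution carries the original $R$-instance along with it. Then $Q' \leq_c Q$ with forward function $F$, since $X \oplus \widehat{Y}$ is itself a $Q'$-solution and is computable from $X \oplus \widehat{Y}$. Set $\dom(P') = \ran(Q')$ and, for $X \oplus \widehat{Y} \in \dom(P')$, set $P'(X \oplus \widehat{Y}) = R(X)$. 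The forward function $X \oplus \widehat{Y} \mapsto G(X, \widehat{Y})$ witnesses $P' \leq_c P$: the third bullet of Definition \ref{QFollowedByP} produces, from any $\widehat{\widehat{Y}} \in P(G(X,\widehat{Y}))$, some $Y \in R(X) = P'(X \oplus \widehat{Y})$ with $Y \leq_T X \oplus \widehat{Y} \oplus \widehat{\widehat{Y}}$. By construction $Q'(X) \subseteq \dom(P')$ and every $(P' \circ Q')$-solution of $X$ lies in $R(X)$, so $R \leq_c P' \circ Q'$ via the identity map.

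For the backward direction, suppose $R \leq_c P' \circ Q'$ via $L$, $Q' \leq_c Q$ via $K$, and $P' \leq_c P$ via $H$. Given an $R$-instance $X$, I set $\widehat{X} := K(L(X)) \leq_T X$. For any $\widehat{Y} \in Q(\widehat{X})$, the reduction $Q' \leq_c Q$ yields some $Y' \in Q'(L(X))$ with $Y' \leq_T L(X) \oplus \widehat{Y} \leq_T X \oplus \widehat{Y}$; since $L(X) \in \dom(P' \circ Q')$ we have $Y' \in \dom(P')$, so I set $\Xdoublehat := H(Y') \leq_T X \oplus \widehat{Y}$. For any $\widehat{\widehat{Y}} \in P(\Xdoublehat)$, the reduction $P' \leq_c P$ produces $Z \in P'(Y') \subseteq (P' \circ Q')(L(X))$ with $Z \leq_T Y' \oplus \widehat{\widehat{Y}} \leq_T X \oplus \widehat{Y} \oplus \widehat{\widehat{Y}}$, and then the reduction $R \leq_c P' \circ Q'$ yields the desired $Y \in R(X)$ with $Y \leq_T X \oplus Z \leq_T X \oplus \widehat{Y} \oplus \widehat{\widehat{Y}}$.

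The main conceptual obstacle is the forward direction: $P'$ must produce $R$-solutions, but the mediating object $Q'(X)$ naturally only contains $Q$-solutions, which by themselves do not determine $X$. The fix is to pair $X$ into the output of $Q'$, which is legal because $Q' \leq_c Q$ only requires that some $Q'$-solution be computable from $X \oplus \widehat{Y}$ (not that the $Q'$-solution be syntactically a $Q$-solution). Everything else is bookkeeping of Turing reductions through the composition.
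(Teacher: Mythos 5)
Your proof is correct and follows essentially the same strategy as the paper's: the backward direction is the same chain of unfoldings with the same Turing-degree bookkeeping, and the forward direction rests on the same key idea of threading side information through the solutions of decorated problems $Q'$ and $P'$. The only (immaterial) difference is that the paper's $Q'$-solutions carry the next $P$-instance $G(X,\widehat{Y})$ paired with $\widehat{Y}$ and its $P'$ is $\id \times P$, whereas your $Q'$-solutions carry $X$ itself and your $P'$ outputs $R$-solutions directly.
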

	\begin{proof}
		First suppose that $R$ is computably reducible to $Q$ followed by $P$ via $F$ and $G$. Let $P' = \id \times P$. Let $Q'$ be the problem whose instances are $(X,F(X))$ such that $X \in \dom(R)$ and whose solutions to $(X,F(X))$ are $(\widehat{Y},G(X,\widehat{Y}))$ where $\widehat{Y} \in Q(F(X))$.  To see that $R \leq_c Q' \circ P'$, let $X$ be an instance of $R$. Then, $X \geq_T (X,F(X))$ is an instance of $P'$. Then, for any $(\widehat{Y},\widehat{\widehat{Y}}) \in (P' \circ Q')(X,F(X))$, we have that setting  $\widehat{X} := F(X)$, $\widehat{Y} := \widehat{Y}$, $\Xdoublehat = G(X,\widehat{Y})$, and $\widehat{\widehat{Y}} := \widehat{\widehat{Y}}$ witnesses the definition of $R$ being computable reducible to $Q$ followed by $P$ and hence that there is $Y \in R(X)$ such that $Y \leq_c X \oplus \widehat{Y} \oplus \widehat{\widehat{Y}}$, completing the reduction $R \leq_c P' \circ Q'$. 
		
		Now suppose that $R \leq_c P *_c Q$. Then, there is $(P' \circ Q') \in (P *_c Q)$ such that $R \leq_c P' \circ Q'$, say via $H$. Also suppose that $Q' \leq_c Q$ via $F$ and $P' \leq_c P$ via $G$. To show that $R$ is computably reducible to $Q$ followed by $P$, send $X \in \dom(R)$ to $\widehat{X} := G(H(X)) \in \dom(Q)$. Then, for any $\widehat{Y} \in Q(\widehat{X})$, there is $\widehat{Y}' \in Q'(H(X))$ such that $\widehat{Y}' \leq_T X \oplus \widehat{Y}$. Since $Q'$ and $P'$ are compatible, we have that $\widehat{Y}' \in \dom(P')$, so $\Xdoublehat := G(\widehat{Y}{'}) \in \dom(P)$. Then, from any $\widehat{\widehat{Y}} \in P(\Xdoublehat)$, we can compute $\widehat{\widehat{Y}}{'} \in P'(\widehat{Y}{'})$. Since $\widehat{\widehat{Y}}{'}$ is a solution to $(P' \circ Q')(H(X))$ there is $Y \in R(X)$ such that $\widehat{\widehat{Y}}{'} \geq_T Y$, completing the reduction of $R$ to $Q$ followed by $P$. 
	\end{proof}
	
	In the Weihrauch degrees, $R \leq_W P \star Q$ if there is a uniform way to obtain instances of $R$ into instances of $P$ with a uniform way of obtaining instances of $Q$ from the $P$-solution. However, a computable reduction is able to use \textit{any} instance of $P$ computable from a $Q$-solution. The computable reducibility degrees are unable to capture this flexibility, in part because, while there are only countably many ways to use $Q$ followed by $P$ in a uniform way, there are uncountably many ways to use $Q$ followed by $P$ in a non-uniform way. 
	
	\begin{Theorem}[No Composition Theorem]
		\label{NoCompositionInComputableReductionDegrees}
		There are problems $P$ and $Q$ such that there is no problem $S$ such that for all problems $R$ it is the case that $S \geq_c R$ if and only if $R$ is computably reducible to $Q$ followed by $P$.
	\end{Theorem}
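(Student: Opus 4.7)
The plan is to pick problems $P$ and $Q$ and then, for each candidate upper bound $S$, build a problem $R$ with $R \leq_c P *_c Q$ but $R \not\leq_c S$. By Proposition \ref{DefinitionsCoincide}, the reduction $R \leq_c P *_c Q$ amounts to $R$ being computably reducible to $Q$ followed by $P$ in the sense of Definition \ref{QFollowedByP}; the critical freedom being exploited is the one highlighted in the paper's remark, namely that the forward maps $F$ and $G$ need only be pointwise Turing-below their arguments, not uniformly computable.

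For the choice of $P$ and $Q$, I would take branching multi-valued problems on $2^\omega$ rich enough that the family $\{P' \circ Q' : P' \leq_c P,\, Q' \leq_c Q\}$ realizes cofinally many $\leq_c$-degrees. The precise shape of $P$ and $Q$ will be dictated by the diagonalization below; what I need is that, per $R$-instance, the non-uniform $F$ and $G$ can route through qualitatively different $P,Q$-compositions to produce solutions of varied Turing complexity.

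Given $S$, I would construct $R$ by diagonalizing against every candidate backward Turing functional for a reduction $R \leq_c S$. Enumerating the Turing functionals as $\Phi_0, \Phi_1, \dots$, for each $e$ I include an instance $X_e \in \dom(R)$ coding $e$ and define $R(X_e)$ so that for every $\widehat{X} \leq_T X_e$ with $\widehat{X} \in \dom(S)$ there exists $\widehat{Y} \in S(\widehat{X})$ with $\Phi_e(X_e \oplus \widehat{Y}) \notin R(X_e)$. Since there are only countably many $\widehat{X} \leq_T X_e$, a cardinality or axiom-of-choice argument produces such $R(X_e)$, provided the $\widehat{Y}$-choices in $S(\widehat{X})$ give enough Turing-cone flexibility. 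This alone gives $R \not\leq_c S$: any reduction using backward functional $\Phi_e$ fails on the instance $X_e$.

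The main obstacle, and the technical heart of the proof, is then verifying $R \leq_c P *_c Q$. For this I would exhibit specific $P' \leq_c P$ and $Q' \leq_c Q$ and use the non-uniformity of $F$ and $G$ to let the composition route each $X_e$ to a $(P' \circ Q')$-solution lying in the previously-chosen $R(X_e)$. The delicate balance is: $R(X_e)$ must be complex enough to avoid all relevant $S$-induced cones, yet reachable from a $P *_c Q$-composition tailored to $X_e$. The argument hinges on choosing $P$ and $Q$ whose composition class has sufficient per-instance flexibility, via non-uniform forward routing, to realize the diagonally-chosen $R(X_e)$ as solutions; this is precisely the phenomenon the paper identifies as the obstruction to a $\leq_c$-supremum.
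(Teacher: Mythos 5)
There are genuine gaps here, and the overall strategy as outlined cannot be completed without a further idea. First, the diagonalization does not actually rule out $R \leq_c S$. A computable reduction has no single backward functional: for each instance $X$ and \emph{each} solution $\widehat{Y} \in S(\widehat{X})$ the reduction may use a different functional, so reductions are indexed by arbitrary functions from (instance, solution) pairs to indices, of which there are uncountably many. Arranging $\Phi_e(X_e \oplus \widehat{Y}) \notin R(X_e)$ defeats only reductions that use $\Phi_e$ at every solution of $X_e$'s forward image. To defeat all reductions at an instance $X$ you must find, for \emph{every} $\widehat{X} \leq_T X$ in $\dom(S)$, some $\widehat{Y} \in S(\widehat{X})$ such that $R(X)$ avoids the entire lower cone $\{Y : Y \leq_T X \oplus \widehat{Y}\}$; your construction does not arrange this, and whether it can be arranged while keeping $R(X)$ reachable from a $P' \circ Q'$-solution is precisely the step you defer. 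There is also a quantifier problem: $P$ and $Q$ must be fixed before $S$ is given, yet you say their shape ``will be dictated by the diagonalization,'' which depends on $S$.

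More fundamentally, your plan never uses the ``only if'' direction of the hypothesis on $S$ --- that everything $\leq_c S$, in particular $S$ itself, is computably reducible to $Q$ followed by $P$. Without that, you are trying to show the class of problems reducible to $Q$ followed by $P$ has no upper bound whatsoever, and this is false for natural choices of $P$ and $Q$: for the problems used in the paper, the problem with a single computable instance whose unique solution is the join of $\mathcal{A} \cup \{A_0, A_1\}$ is an upper bound for the whole class (it is just not \emph{in} the class). Any correct proof must therefore exploit that $S$ itself is reducible to $Q$ followed by $P$. The paper does this with a counting argument rather than a diagonalization: uncountably many problems $R_f$ (one for each $f : \mathcal{A} \to 2$, with $\mathcal{A}$ Turing independent) are all reducible to $Q$ followed by $P$, yet $S$ has only countably many computable instances, so two distinct $f_0, f_1$ share a forward image $\widehat{X}$; since $S$ is itself reducible to $Q$ followed by $P$, some $Y \in S(\widehat{X})$ is computable from $X \oplus A \oplus A_i$, and Turing independence forces $f_0(A) = i = f_1(A)$, a contradiction. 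I would suggest reworking your argument around that pigeonhole structure rather than trying to repair the per-functional diagonalization.
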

	\begin{proof}
		Let $\mathcal{A}$ be an infinite collection of Turing independent sets and let $A_0$ and $A_1$ be such that $\mathcal{A} \cup \{A_0,A_1\}$ is also Turing independent. Let $Q$ be the problem whose instances are computable sets and whose solutions to $X$ are any element of $\mathcal{A}$. Let $P$ be the single valued problem with two instances, $0$ and $1$, such that $A_0$ is the only $P$-solution to $0$ and $A_1$ is the only $P$-solution to $1$. 
		
		We will show that there are too many problems which are reducible to $Q$ followed by $P$. For each function $f: \mathcal{A} \to 2$, let $R_f$ be the problem whose instances are computable sets and whose solutions are any $(A,A_i)$ such that $A \in \mathcal{A}$ and $f(A) = i$. Then we have that $R_f$ is computably reducible to $Q$ followed by $P$ for each $f$. To see this, let $X$ be an instance of $R_f$. Then, $X$ is also an instance of $Q$. Let $A \in Q(X) = \mathcal{A}$. Then, $f(A)$ is an instance of $P$ such that $P(f(A)) = A_{f(A)}$. The pair $(A,A_{f(A)})$ is computable from $A \oplus A_{f(A)}$, completing the reduction. 
		
		Now suppose for contradiction there is problem $S$ such that $R \leq_c S$ if and only if $R$ is computably reducible to $Q$ followed by $P$. Then, in particular, we have that $R_f \leq_c S$ for each $f: \mathcal{A}\to 2$ and that $S$ is computably reducible to $Q$ followed by $P$. 
		
		Let $2^\mathcal{A}$ be the set of functions from $\mathcal{A}$ to 2. Fix computable set $X$. So that the notation matches Definition \ref{QFollowedByP}, we will superfluously write computable sets in Turing reductions. Due to the reductions $R_f \leq_c S$, we have that for each $f \in 2^\mathcal{A}$, there is $\widehat{X} \leq_T X$ such that $\widehat{X} \in \dom(S)$ and such that for each $Y \in S(\widehat{X})$ it is the case that $Y \geq_T (A,A_{f(A)})$ for some $A \in \mathcal{A}$. Because ${2^\mathcal{A}}$ is uncountable and $S$ can only have up to countably many computable instances, by the pigeonhole principle there must be $f_0 \neq f_1 \in {2^\mathcal{A}}$ which share such an $\widehat{X}$. Fix such an $f_0,f_1$, and $\widehat{X}$. 
		
		Fix $A \in \mathcal{A}$ such that $f_0(A) \neq f_1(A)$. Since $S$ is computably reducible to $Q$ followed by $P$, there is $Y \in S(\widehat{X})$ such that 
		$Y \leq_T X \oplus A \oplus A_i$ for some $i \in \{0,1\}$. By our choice of $\widehat{X}$, we also have that there are $A'$ and $A''$ such that $Y \geq_T (A',A_{f_0(A')})$ and $Y \geq_T (A'',A_{f_1(A'')})$. By transitivity of $\leq_T$, we have that $X \oplus A \oplus A_{i} \geq_T  (A',A_{f_0(A')})$ and $X \oplus A \oplus A_{i} \geq_T (A'',A_{f_1(A'')})$. 	
		By Turing independence of $\mathcal{A} \cup \{A_0,A_1\}$, we have that $A = A' = A''$ and $i = f_0(A') = 
		f_1(A'')$. However, this contradicts that $f_0(A) \neq f_1(A)$. 
	\end{proof}
	
	We conclude by noting that one can extend Theorem \ref{NoCompositionInComputableReductionDegrees} to applications of the same problem. For the same $Q$ and $P$, a similar argument shows that there is no representative in the $\leq_c$-degrees of computable reducibility to two applications of $Q \sqcup P$.

	The author thanks Damir Dzhafarov for his feedback on early versions of these results.

\newpage
\printbibliography
\end{document}